\documentclass[11pt]{article}

\usepackage[margin=1in]{geometry}
\usepackage{amsmath,amssymb,amsthm}
\usepackage[T1]{fontenc}
\usepackage[colorlinks=true,linkcolor=black,citecolor=black,urlcolor=blue!55!black]{hyperref}

\newcommand{\T}{\mathbb{T}}
\newcommand{\abs}[1]{\left\lvert #1\right\rvert}

\theoremstyle{plain}
\newtheorem*{proposition}{Proposition}
\theoremstyle{remark}
\newtheorem*{remark}{Remark}

\title{\bfseries Palindromes on the $\tau$-circle\\[2pt]
\large A note for Palindrome Tau Day, \texttt{6/28/26}}
\author{Scott Duke Kominers%
\thanks{%
\textsc{Harvard Business School; Department of Economics and Center of
Mathematical Sciences and Applications, Harvard University; and a16z crypto.}
I used LLMs to assist in the preparation of this note, especially Claude~4.8 Opus. The subject matter and celebra$\tau$ion are my own; and of course any errors remain my responsibili$\tau$y.\smallskip%
\newline\indent%
\emph{2020 Mathematics Subject Classification.} Primary 00A08; Secondary 11A63, 11R18, 30C15.
\newline\indent%
\emph{Key words and phrases.} Tau Day, palindromes, unit circle, self-reciprocal polynomials, cyclotomic polynomials, Chebyshev transform.\smallskip
}}
\date{}

\begin{document}
\maketitle
\vspace{-2.2em}
\begin{center}\small $62826$\end{center}
\vspace{0.6em}

\begin{abstract}
\noindent An integer palindrome is a self-reciprocal polynomial evaluated at its base,
so its roots are symmetric about the unit circle---where the coordinate is angle, in 
turns of $\tau$. Read this way, the date $\texttt{6/28/26}\to 62826$ secretly contains
the primitive cube roots of unity---at angle $\tau/3$---along with one further pair 
of roots on the circle.
\end{abstract}

\section{The date}

Tau Day is the 28th of June, because $\tau=2\pi=6.283\ldots$ and $6/28$ reads as $6.28$.
This year the date is also a palindrome: concatenating month, day, and year,
\[
\texttt{6/28/26}\;\longrightarrow\;6\,\Vert\,28\,\Vert\,26\;=\;62826,
\]
which reads the same backwards as forwards. The coincidence runs deeper. Scaled to five
digits, $\tau\cdot 10^{4}=62831.85\ldots$ sits between the palindromes $62826$ and
$62926$, and far nearer the first ($5.85$ against $94.15$). So $62826$---read with a
decimal point, as $6.2826$---is the five-digit palindrome closest to $\tau$.\footnote{One
might expect this to be automatic: surely the nearest palindrome is just the leading
digits of $\tau$, reflected? At five digits that recipe does give $62826$ (from $6,2,8$),
but it is not reliable---at four digits it returns $6226$, whereas $6336$ is in fact
closer to $\tau\cdot 10^{3}$. So the five-digit agreement (nearest palindrome, reflected 
digits, and the date, all at once) is a genuine coincidence, not a foregone conclusion.}

That much is numerology. The real link between palindromes and $\tau$ is geometric, and
it runs through the unit circle---which is what the rest of this note is about.

\section{From digits to the circle}

Write a base-$b$ palindrome with digits $(d_0,d_1,\dots,d_n)$, $d_k=d_{n-k}$, as the
value $P(b)$ of the polynomial
\[
P(x)=\sum_{k=0}^{n} d_k\,x^{k}.
\]
The symmetry of the coefficients makes $P$ \emph{self-reciprocal} (\emph{palindromic} as
a polynomial): $x^{n}P(1/x)=P(x)$. Equivalently, its roots are closed under inversion
$z\mapsto 1/z$---if $\rho$ is a root, then so is $1/\rho$.

The coefficients are real, so the roots are closed under conjugation too. Off the
unit circle they then come in quadruples $\{\rho,\bar\rho,1/\rho,1/\bar\rho\}$ (or pairs,
when $\rho$ is real), while a root lands \emph{on} the circle $\T=\{\abs{z}=1\}$ exactly
when its reciprocal and its conjugate coincide---that is, when $\abs{\rho}=1$. The circle
is the curve where reciprocal and conjugate agree.

That is where $\tau$ enters. A point of $\T$ is $e^{i\theta}$, described by its angle, 
as a fraction of a full turn, $\theta/\tau\in[0,1)$. The the cyclotomic polynomials 
$\Phi_n$ ($n\ge 2$) are the cleanest case: monic integer palindromes whose roots
sit exactly on $\T$, at rational angles $\tau\,k/n$. They are where ``integer palindrome''
and the constant $\tau$ coincide exactly. What remains is to see when a general
palindrome's roots reach the $\tau$-circle.

\section{62826}

We read the date as a polynomial over its base:
\[
62826 = P(10),\qquad P(x)=6x^{4}+2x^{3}+8x^{2}+2x+6 .
\]
To find $P$'s roots, we use the symmetry of the coefficients directly. Divide by $x^2$
(harmless, since $0$ is not a root) and collect the terms the palindrome symmetry pairs
together:
\[
\frac{P(x)}{x^{2}} = 6\bigl(x^{2}+x^{-2}\bigr)+2\bigl(x+x^{-1}\bigr)+8 .
\]
The right-hand side depends only on $u:=x+\tfrac1x$, since $x^{2}+x^{-2}=u^{2}-2$.
Substituting,
\[
\frac{P(x)}{x^{2}} = 6(u^{2}-2)+2u+8 = 6u^{2}+2u-4 = 2(3u-2)(u+1).
\]
The quartic has collapsed to a quadratic in $u$, with roots $u=-1$ and $u=\tfrac23$. Each
value of $u$ unfolds into a pair of $x$s through $x^{2}-ux+1=0$. The first, $u=-1$, gives
$x^{2}+x+1=\Phi_3(x)$, whose roots are the primitive cube roots of unity
$e^{\pm i\tau/3}$---a clean $\tau/3$ hiding inside our party-hat number. The second,
$u=\tfrac23$, gives $3x^{2}-2x+3$, whose roots $\dfrac{1\pm 2\sqrt2\,i}{3}$ also have
modulus $1$---$\bigl\lvert\tfrac{1\pm2\sqrt2\,i}{3}\bigr\rvert^{2}=\tfrac{1+8}{9}=1$---and
sit at angle $\arccos\tfrac13\approx0.196\,\tau\approx70.53^\circ$. The full factorization,
\[
\boxed{\,6x^{4}+2x^{3}+8x^{2}+2x+6 
\;=\; 2\,(x^{2}+x+1)\,(3x^{2}-2x+3)
\;=\;2\,\Phi_3(x)\,(3x^{2}-2x+3),\,}
\]
puts all four roots of $62826$ on the unit circle.

\begin{remark}
We call a palindrome \emph{unit} if all its roots lie on $\T$; $62826$ is an example. 
By Kronecker's theorem~\cite{Kronecker} a \emph{monic} integer polynomial with all roots
on $\T$ is a product of cyclotomics, with every angle a rational multiple of $\tau$---and 
$62826$ escapes that conclusion by not being monic. Its roots $\tfrac{1\pm2\sqrt2\,i}{3}$ are
not algebraic integers, and by Niven's theorem~\cite{Niven} their angle $\arccos\tfrac13$
is an \emph{irrational} multiple of $\tau$ (as $\cos\theta=\tfrac13\notin\{0,\pm\tfrac12,\pm1\}$).
So $62826$ sits on the edge between the cyclotomic palindromes and the rest: it has every root on
the circle, yet one pair at an angle no fraction of a turn will ever name.
\end{remark}

\section{Not just 62826}

Our calculation for $62826$ is an example of something general: the fold
$u=x+\tfrac1x$ halves the degree of any even palindrome, and yields a simple
test for which roots land on the circle.

\begin{proposition}
Let $P$ be a self-reciprocal polynomial of degree $2m$. Then a unique polynomial $Q$ of
degree $m$ satisfies
\[
P(x)=x^{m}\,Q\!\left(x+\tfrac1x\right),
\]
and each root $x$ of $P$ gives a root $u=x+\tfrac1x$ of $Q$. The root $x$ lies on the
unit circle if and only if $u$ is real and lies in $[-2,2]$; in that case $u=2\cos\theta$
for $x=e^{i\theta}$. (A self-reciprocal polynomial of odd degree has $-1$ as a
root; dividing by $x+1$ returns to the even case.)
\end{proposition}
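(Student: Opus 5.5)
The plan has three parts: build $Q$ from the Chebyshev-type identities, check that roots correspond, and then characterize when a root lies on $\T$.

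\textbf{Existence and uniqueness of $Q$.} Write $P(x)=\sum_{k=0}^{2m} d_k x^k$ with $d_k=d_{2m-k}$. Dividing by $x^m$ and pairing symmetric terms, exactly as in the computation for $62826$, gives
\[
x^{-m}P(x)=d_m+\sum_{j=1}^{m} d_{m+j}\bigl(x^{j}+x^{-j}\bigr).
\]
Each $x^j+x^{-j}$ is a monic polynomial of degree $j$ in $u=x+x^{-1}$. I will prove this by induction using
\[
x^{j+1}+x^{-(j+1)}=u\bigl(x^j+x^{-j}\bigr)-\bigl(x^{j-1}+x^{-(j-1)}\bigr),
\]
with base cases $2$ and $u$. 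Substituting gives $Q$, and its degree is $m$ because its leading coefficient is $d_{2m}\neq 0$.

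For uniqueness: if $Q_1(x+1/x)=Q_2(x+1/x)$ for all $x\neq 0$, then $Q_1-Q_2$ vanishes on the image of $x\mapsto x+1/x$. That image is all of $\mathbb C$, since $x^2-ux+1=0$ is solvable for every $u$. Hence $Q_1=Q_2$.

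\textbf{Roots correspond.} Since $P(0)=d_0=d_{2m}\neq 0$, every root $x$ of $P$ is nonzero. Then $0=P(x)=x^mQ(x+1/x)$ forces $Q(u)=0$ with $u=x+1/x$.

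\textbf{The circle criterion.} This is the only real content of the statement.
\begin{itemize}
\item If $x=e^{i\theta}$, then $1/x=\bar x$, so $u=2\cos\theta\in[-2,2]$.
\item Conversely, suppose $u\in[-2,2]$ is real. Then $x$ is a root of $x^2-ux+1=0$, whose discriminant $u^2-4\le 0$. The two roots are therefore complex conjugates, or a repeated real root $\pm1$ when $u=\pm2$. Their product is $1$, so $|x|^2=x\bar x=1$.
\end{itemize}
It is worth spelling out why the converse needs both hypotheses. If $u$ is real with $|u|>2$, the two roots are distinct reals with product $1$, so they lie off $\T$. If $u$ is non-real, $x$ cannot lie on $\T$ at all, since points of $\T$ give real $u$.

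\textbf{The parenthetical odd case.} If $\deg P$ is odd, the relation $x^nP(1/x)=P(x)$ at $x=-1$ gives $(-1)^nP(-1)=P(-1)$, so $P(-1)=0$. Write $P=(x+1)R$. Then $R$ is again self-reciprocal, because $x+1$ is, and reciprocity is multiplicative:
\[
x^{n}P(1/x)=\bigl(x\cdot(1/x+1)\bigr)\bigl(x^{n-1}R(1/x)\bigr).
\]
This yields $x^{n-1}R(1/x)=R(x)$, and $R$ has even degree, returning us to the case already handled.

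The main obstacle is modest: it is keeping the converse direction honest about why both realness and $|u|\le 2$ are needed, which the two cases above address.
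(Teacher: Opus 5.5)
Your proof is correct and follows essentially the same route as the paper's: divide by $x^m$ and pair the terms $x^{\pm j}$, generate $s_j=x^j+x^{-j}$ by the recurrence $s_{j+1}=u\,s_j-s_{j-1}$, and settle the circle criterion with the discriminant of $x^2-ux+1$ and Vieta's product of roots. The only departures are minor: you get uniqueness from the surjectivity of $x\mapsto x+1/x$ onto $\mathbb{C}$ rather than from $\{1,s_1,\dots,s_m\}$ being a basis, and you actually verify the parenthetical odd-degree claim ($P(-1)=0$ and the quotient is again self-reciprocal), which the paper only asserts.
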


\noindent The proposition is classical. The substitution $u=x+\tfrac1x$ is from the theory
of reciprocal equations, and $Q$ is the \emph{Chebyshev transform} of $P$ (the $s_j$ in
the proof below are Chebyshev polynomials in $u$). That $P$'s roots lie on the circle
exactly when $Q$'s lie in $[-2,2]$ is Lemma~1 of Lakatos~\cite{Lakatos}; see also
Marden~\cite{Marden}, the survey of Vieira~\cite{Vieira}, and Konvalina--Matache~\cite{KM}
on palindromes and the circle. The short argument:

\begin{proof}
Divide by $x^{m}$ and pair the indices $k=m\pm j$, using $d_{m+j}=d_{m-j}$:
\[
\frac{P(x)}{x^{m}}=\sum_{k=0}^{2m} d_k\,x^{k-m}
= d_m+\sum_{j=1}^{m} d_{m+j}\bigl(x^{j}+x^{-j}\bigr).
\]
Set $u=x+x^{-1}$. The functions $s_j:=x^{j}+x^{-j}$ obey $s_0=2$, $s_1=u$, and the
identity $x^{j+1}+x^{-(j+1)}=(x+x^{-1})(x^{j}+x^{-j})-(x^{j-1}+x^{-(j-1)})$ gives
$s_{j+1}=u\,s_j-s_{j-1}$. Hence each $s_j$ with $j\ge1$ is a monic polynomial in $u$ of
degree $j$, and
\[
Q(u):=d_m+\sum_{j=1}^{m} d_{m+j}\,s_j(u)
\]
is a polynomial of degree $m$ with leading coefficient $d_{2m}\neq0$, satisfying
$P(x)=x^{m}Q(x+\tfrac1x)$; uniqueness holds because $\{1,s_1,\dots,s_m\}$ is a basis for
the polynomials of degree $\le m$. For the second claim, if $x=e^{i\theta}$, then
$u=2\cos\theta\in[-2,2]$. Conversely, if $u\in(-2,2)$ is real, then $x^{2}-ux+1=0$ has
negative discriminant, while the product of its roots equals $1$ by Vieta's formulas, so
the two roots are complex conjugates of modulus $1$; the endpoints $u=\pm2$ give the double
roots $x=\pm1\in\T$; and any real $u$ with $\abs{u}>2$, or any non-real $u$, yields
$x\notin\T$. Thus $x\in\T\iff u\in[-2,2]$.
\end{proof}

Seen through the general proposition, our ``da$\tau$e'' reads in one line: $62826$ folds to
$6u^{2}+2u-4$, whose roots $-1$ and $\tfrac23$ both lie in the window $[-2,2]$; hence, all 
four roots of $62826$ sit on the $\tau$-circle. In general, a palindrome's roots land on 
the circle precisely to the extent that its folded half keeps its roots in $[-2,2]$.

\section{Coda}

A palindrome is a polynomial unchanged when its coefficients are read backwards---equivalently, 
when each root is sent to its reciprocal. With real coefficients that leaves
the roots symmetric about the unit circle, and the circle records angle in turns of $\tau$. 
So every palindrome is already a statement about $\tau$, and $62826$ is the one that says it on the
nose this June---right down to the perfect $\tau/3$ folded inside. For an even sharper rendition, 
wait eight hundred years---$6/28/2826$ reads as $6.282826$, the closest a palindrome date comes 
to $\tau$ (at least for the next many millennia).

\medskip
\noindent Happy Palindrome Tau Day, $\mathbb{QED}$!


\begin{thebibliography}{1}

\bibitem{KM}
J.~Konvalina and V.~Matache.
\newblock {Palindrome-polynomials with roots on the unit circle}.
\newblock {\em C.~R. Math. Acad. Sci. Soc. R. Can.}, 26(2):39--44, 2004.

\bibitem{Kronecker}
L.~Kronecker.
\newblock {Zwei S\"atze \"uber Gleichungen mit ganzzahligen Coefficienten}.
\newblock {\em J. Reine Angew. Math.}, 53:173--175, 1857.

\bibitem{Lakatos}
P.~Lakatos.
\newblock {On zeros of reciprocal polynomials}.
\newblock {\em Publ. Math. Debrecen}, 61(3-4):645--661, 2002.

\bibitem{Marden}
M.~Marden.
\newblock {\em {Geometry of Polynomials}}.
\newblock Mathematical Surveys, No.~3. American Mathematical Society,
  Providence, RI, 2nd edition, 1966.

\bibitem{Niven}
I.~Niven.
\newblock {\em {Irrational Numbers}}.
\newblock Carus Mathematical Monographs, No.~11. Mathematical Association of
  America, 1956.

\bibitem{Vieira}
R.~S. Vieira.
\newblock {Polynomials with Symmetric Zeros}.
\newblock In C.~S. Ryoo, editor, {\em {Polynomials -- Theory and Application}}.
  IntechOpen, 2019.
\newblock arXiv:1904.01940.

\end{thebibliography}
\end{document}